\newcommand{\card}[1]{\lvert #1\rvert} 
\newtheorem{theorem}{Theorem}[section]
\newtheorem{lemma}[theorem]{Lemma}
\newtheorem*{mainthm}{Main Theorem}
\newtheorem*{brooks}{Brooks' Theorem}
\newtheorem*{BK}{Borodin--Kostochka Conjecture}
\newtheorem*{copycatlem}{Copycat Lemma}
\newcommand\vph{\varphi}
\newcommand\G{\mathcal{G}}
\newcommand\HH{\mathcal{H}}
\tikzstyle{uStyle}=[fill=white, circle, thick, draw=black, inner sep=1.2pt]
\tikzstyle{lStyle}=[fill=none, draw=none, circle, inner sep=15pt]
\begin{document}
\author{Daniel W. Cranston\thanks{%
Department of Mathematics and Applied
Mathematics, Viriginia Commonwealth University, Richmond, VA;
\texttt{dcranston@vcu.edu}} 
\and
Hudson Lafayette\thanks{
Department of Mathematics and Applied
Mathematics, Viriginia Commonwealth University, Richmond, VA;
\texttt{lafayettehl@vcu.edu}} 
\and
Landon Rabern\thanks{
\texttt{landon.rabern@gmail.com}}
}
\title{Coloring $(P_5, \text{gem})$-free graphs with $\Delta -1$ colors}
\maketitle
\begin{abstract}
The Borodin--Kostochka Conjecture states that for a graph $G$, if
$\Delta(G)\geq 9$ and $\omega(G)\leq \Delta(G)-1$, then
$\chi(G)\leq\Delta(G) -1$. We prove the Borodin--Kostochka Conjecture
for $(P_5, \text{gem})$-free graphs, i.e., graphs with no induced $P_5$ and
no induced $K_1\vee P_4$.
\end{abstract}

\section{Introduction}

\indent Every graph $G$ satisifies $\chi(G)\le \Delta(G) +1$. (We denote
by $\Delta(G)$, $\omega(G)$, and $\chi(G)$
the \textit{maximum degree}, \textit{clique number}, and  \textit{chromatic
number} of $G$.)
To see this, we greedily coloring the vertices of $G$ in any order.
In 1941, Brooks~\cite{Brooks}  strengthened this bound to $\chi(G)\le \Delta(G)$.

\begin{brooks}
Let $G$ be a graph. If $\Delta(G)\geq 3$ and $\omega(G)\leq \Delta(G)$, then $\chi(G) \leq \Delta(G)$. 
\end{brooks}

 In 1977, Borodin and Kostochka~\cite{BK77} conjectured a further strengthening of Brooks' bound.

\begin{BK}
Let $G$ be a graph. If $\Delta(G)\geq 9$ and $\omega(G)\leq \Delta(G)-1$, then $\chi(G) \leq \Delta(G)-1$.
\end{BK}

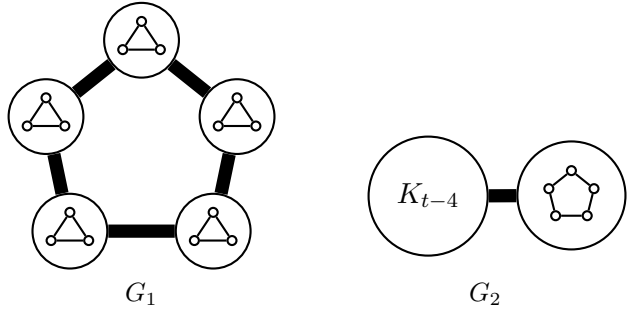
\begin{figure}[t]
\centering
\captionsetup{margin=.8in} 
  \begin{tikzpicture}[scale=.5]
 \tikzset{every node/.style=uStyle}
   \node (a) at (0,.75) {}; 
   \node (b) at (.5,0) {};
   \node (c) at (-.5,0) {}; \draw[thick, black] (a)--(b)--(c)--(a);
   \node[fill=none, draw=black, thick, circle, inner sep=10pt] (c1) at (0,.25) {};

   \begin{scope}[xshift=1in,yshift=-.8in]
   \node (a) at (0,.75) {}; 
   \node (b) at (.5,0) {};
   \node (c) at (-.5,0) {}; \draw[thick, black] (a)--(b)--(c)--(a);
   \node[fill=none, draw=black, thick, circle, inner sep=10pt] (c2) at (0,.25) {};
   \end{scope}

   \begin{scope}[xshift=.75in,yshift=-2in]
   \node (a) at (0,.75) {}; 
   \node (b) at (.5,0) {};
   \node (c) at (-.5,0) {}; \draw[thick, black] (a)--(b)--(c)--(a);
   \node[fill=none, draw=black, thick, circle, inner sep=10pt] (c3) at (0,.25) {};
   \end{scope}

   \begin{scope}[xshift=-.75in,yshift=-2in]
   \node (a) at (0,.75) {}; 
   \node (b) at (.5,0) {};
   \node (c) at (-.5,0) {}; \draw[thick, black] (a)--(b)--(c)--(a);
   \node[fill=none, draw=black, thick, circle, inner sep=10pt] (c4) at (0,.25) {};
   \end{scope}

   \begin{scope}[xshift=-1in,yshift=-.8in]
   \node (a) at (0,.75) {}; 
   \node (b) at (.5,0) {};
   \node (c) at (-.5,0) {}; \draw[thick, black] (a)--(b)--(c)--(a);
   \node[fill=none, draw=black, thick, circle, inner sep=10pt] (c5) at (0,.25) {};
   \end{scope}
   \draw[line width=5pt, black] (c1)--(c2)--(c3)--(c4)--(c5)--(c1);
   \node[lStyle,inner sep=0pt] (g1) at (0,-6.5) {\small$G_1$};
   
   \begin{scope}[xshift=3in, scale=1.2, yshift=-1.2in]
   \node[circle, thick, draw=black, fill=white, inner sep=.1in] (1) at (0,-.2) {$K_{t-4}$}; 
   \begin{scope}[xshift=1.25in, yshift=-.25in]
    \node[circle, thick, draw=black, fill=white, inner sep=.2in] (2) at (0,.45) {}; 
    \node (a) at (0,1) {}; 
    \node (b) at (.5,.6) {};
    \node (c) at (.35,0) {}; 
    \node (d) at (-.35,.0) {};
    \node (e) at (-.5,.6) {}; 
    \draw[thick, black] (a)--(b)--(c)--(d)--(e)--(a);
   \end{scope}
   \draw[line width=5pt, black] (1)--(2); 
   \node[lStyle,inner sep=0pt] at ([shift=({0:3in})]g1) {\small$G_2$};
   \end{scope}
  \end{tikzpicture}
  \caption{Each bold edge denotes a complete bipartite graph.  
We have $\Delta(G_1)=8$, $\omega(G_1)=6$, but $\chi(G_1)=8$.  And
we have $\Delta(G_2)=t$, $\omega(G_2)=t-2$, but $\chi(G_2)=t-1$.}
 \label{fig:fig1}
\end{figure}

If true, the Borodin--Kostochka Conjecture is best possible in the following
two ways.  First, if the hypothesis $\Delta(G)\geq 9$ is weakened to
$\Delta(G)\geq 8$ then the conjecture is false, as witnessed by 
$G_1$, on the left in Figure \ref{fig:fig1}. Note that $G_1$ is a counterexample
to this stronger version since $\Delta(G_1)=8$ and
$\omega(G_1)=6\leq 7=\Delta(G_1)-1$, but $\chi(G)=8>7=\Delta(G_1)-1$. 
Second, if the
hypothesis $\omega(G)\leq \Delta(G) -1$ is strengthened to $\omega(G)\leq
\Delta(G)-2$, then the bound $\chi(G)\leq \Delta(G)-1$ cannot be
strengthened to $\chi(G)\leq \Delta(G)-2$, as witnessed by $G_2$, on the
right in Figure~\ref{fig:fig1}. 
For each $t\geq 9$, note that $G_2$ is a counterexample to this stronger
version, since $\Delta(G)=t\geq9$ and $\omega(G_2)= \Delta(G_2)-2$, but
$\chi(G_2)=\Delta(G_2)-1$.

By Brooks' Theorem, each graph $G$ with $\chi(G)>\Delta(G) \geq 9$
contains $K_{\Delta(G)+1}$.
So the Borodin--Kostochka Conjecture asserts that each $G$ with
$\chi(G)=\Delta(G)\geq 9$ contains $K_{\Delta(G)}$. 
This conjecture has been proved for many interesting classes of graphs,
particularly those defined by forbidden subgraphs.
In 2013, Cranston and Rabern~\cite{CR13clawfree} proved it for claw-free graphs.  

\begin{theorem}[\cite{CR13clawfree}]
Every claw-free graph with $\chi(G) \geq \Delta(G) \geq 9$ contains $K_{\Delta(G)}$.
\end{theorem}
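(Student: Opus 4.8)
The plan is to argue from a minimal counterexample, exploiting that claw-freeness forces every neighborhood to have independence number at most $2$, hence to be ``almost a clique''; the engine is a Kempe-chain / list-recoloring analysis at a vertex of maximum degree. First I would set $k:=\Delta(G)$ and take a claw-free counterexample $G$ with $|V(G)|$ least, so that $G$ is vertex-critical. Brooks' Theorem gives $\chi(G)\le k$ whenever $K_{k+1}\not\subseteq G$; since $K_k\not\subseteq G$ also forbids $K_{k+1}$, we get $\chi(G)=k$, hence $\delta(G)\ge k-1$, so every vertex is \emph{big} ($d(v)=k$) or \emph{small} ($d(v)=k-1$). Because $K_k\not\subseteq G$, every neighborhood satisfies $\omega(G[N(v)])\le k-2$, while claw-freeness gives $\alpha(G[N(v)])\le 2$; I would also record that $G$ is $2$-connected and has no clique cutset.

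Next comes the reducibility step. Fix a big vertex $v$ and a $(k-1)$-coloring of $G-v$; it fails to extend, so the $k$ vertices of $N(v)$ use all $k-1$ colors, exactly one color repeating on a non-adjacent pair. Each $u\in N(v)$ has at most $k-1-d_{G[N(v)]}(u)$ neighbours outside $N(v)\cup\{v\}$, hence at least $d_{G[N(v)]}(u)$ colors free for recoloring; by the Erd\H{o}s--Rubin--Taylor / Borodin degree-choosability theorem one can then recolor within $N(v)$ so as to omit a color there -- and thereby free $v$ -- \emph{unless} $G[N(v)]$ is a Gallai forest whose color classes block every such recoloring, and those tight cases are cleared by Kempe swaps along an edge $vw$ to another big vertex, using that $N(v)\cap N(w)$ contains a large clique. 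The output I am aiming for: in the minimal counterexample $G[N(v)]$ is a Gallai forest for every vertex $v$. Since $d(v)\ge k-1\ge 8>5$, the odd-cycle blocks $C_5$ cannot occur, and $\alpha\le 2$ forbids three clique components, so each $G[N(v)]$ collapses to one of a tiny explicit list: a single clique, two cliques sharing a vertex, two cliques joined by an edge, or two disjoint cliques -- in particular $N(v)$ is covered by two cliques.

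Finally I would use this rigidity globally. A single-clique neighborhood forces $K_{k+1}\subseteq G$, impossible; so every big neighborhood is covered by two cliques $A,B$ with $|A|\ge |B|$, and since $A\cup\{v\}$ is a clique we get $|A|\le k-2$ and hence $|B|\ge 2$. Choosing $v$ big and $x$ of largest degree in the larger clique $A$, one propagates the two-clique picture ($N(x)$ again splits, with one part absorbing almost all of $A$) and, using the absence of a clique cutset, grows a single clique; a short count then shows that $\chi(G)=k$ together with $\omega(G)\le k-1$ is untenable once the local neighborhoods are this constrained, so $K_k\subseteq G$ -- a contradiction. As a more robust alternative one can feed the local two-clique description into the structure theory of claw-free (quasi-line) graphs.

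I expect the main obstacle to be the middle step: delimiting exactly which graphs with $\alpha\le 2$ resist the degree-choosability recoloring, and disposing of the genuinely tight Gallai-forest configurations by Kempe chains across edges joining two big vertices. This is precisely where the hypothesis $k\ge 9$ is used, since the graph $G_1$ of Figure~\ref{fig:fig1} shows that such a configuration really does persist when $k=8$. A secondary difficulty is the bookkeeping in the last step, namely propagating the local two-clique decomposition into a global $K_k$ without sporadic small exceptions.
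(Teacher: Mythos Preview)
First, note that the present paper does not prove this theorem at all: it is quoted from \cite{CR13clawfree} as background, with no argument given here. So there is no ``paper's own proof'' to compare against, and I will assess your outline on its own merits and against what \cite{CR13clawfree} actually does.

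The decisive gap is in your reducibility step. Your list setup is correct: for each $u\in N(v)$, letting $L(u)$ be the set of colors not used on $N_G(u)\setminus N[v]$, one has $|L(u)|\ge d_{G[N(v)]}(u)$. But the Erd\H{o}s--Rubin--Taylor/Borodin theorem only says that $G[N(v)]$ admits \emph{some} proper $L$-coloring when no component is a Gallai tree; it gives no control over the \emph{total} number of colors that $L$-coloring uses. To free a color for $v$ you need an $L$-coloring of $G[N(v)]$ that avoids one fixed color $c$, i.e.\ an $L'$-coloring with $L'(u)=L(u)\setminus\{c\}$, and now $|L'(u)|$ may be only $d_{G[N(v)]}(u)-1$. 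That is precisely a $d_1$-choosability statement (the notion in Lemma~\ref{d1-choosable-lem}), which is strictly stronger than degree-choosability and whose obstruction class is far larger than the Gallai forests. Consequently the conclusion ``$G[N(v)]$ is a Gallai forest'' is unsupported, and the tidy four-item structural list you derive from it (one clique, two cliques meeting in a vertex, two cliques joined by an edge, two disjoint cliques) is not established. Your final ``propagation'' paragraph is also only a gesture: no mechanism is given for turning the local two-clique covers into a global $K_k$, and the sentence ``a short count then shows'' hides exactly the hard part.

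For comparison, the proof in \cite{CR13clawfree} does not run a single-neighborhood recoloring argument of this kind. It passes through the Chudnovsky--Seymour structure theory: one reduces from claw-free to quasi-line graphs, and then handles circular interval graphs and compositions of linear interval strips separately, combining this with a catalogue of $d_1$-choosable configurations in the spirit of Lemma~\ref{d1-choosable-lem}. If you want a structure-theorem-free proof along your lines, the honest replacement for the ERT step is to show that whenever $G[N(v)]$ (with $\alpha\le 2$, $\omega\le k-2$, and at least $k-1\ge 8$ vertices) is \emph{not} covered by two cliques in one of your four ways, then $G[N[v]]$ contains an explicit $d_1$-choosable induced subgraph. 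That is a substantial case analysis, and it, not degree-choosability, is where the hypothesis $k\ge 9$ is genuinely consumed.
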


The strongest partial result toward the Borodin--Kostochka Conjecture is due to
Reed~\cite{ReedBK}.  In 1999, he proved this conjecture for every graph $G$ with $\Delta(G)$
sufficiently large.

\begin{theorem}[\cite{ReedBK}] 
Every graph with $\chi(G)=\Delta(G)\geq 10^{14}$ contains $K_{\Delta(G)}$.
\end{theorem}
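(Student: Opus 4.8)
The plan is to prove the contrapositive: if $\Delta(G)=\Delta\ge 10^{14}$ and $G$ contains no $K_\Delta$, then $\chi(G)\le\Delta-1$. Suppose not, and let $G$ be a vertex-critical graph with $\chi(G)=\Delta(G)=\Delta$; Brooks' Theorem rules out $\chi(G)=\Delta+1$ since $G$ has no $K_{\Delta+1}$. Vertex-criticality forces $\delta(G)\ge\Delta-1$, so every degree is $\Delta-1$ or $\Delta$. For a vertex $v$ let $\mu(v)=\binom{d(v)}{2}-e(G[N(v)])$ count the non-edges inside $N(v)$, fix a small absolute constant $\varepsilon$, and call $v$ \emph{dense} if $\mu(v)<\varepsilon\Delta^2$ and \emph{sparse} otherwise. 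Let $D$ and $S$ be the sets of dense and sparse vertices; the whole argument is a careful decoupling of $G[S]$ from the near-cliques sitting inside $D$.

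For the sparse side I would run the semi-random (``nibble'') colouring procedure with palette $\{1,\dots,\Delta-1\}$: each uncoloured vertex repeatedly takes a uniformly random still-available colour and keeps it unless a neighbour picks the same one. The point of sparseness is that the $\ge\varepsilon\Delta^2$ missing edges inside $N(v)$ make it very likely that two neighbours of $v$ receive a common colour, which creates slack (strictly fewer than $\Delta-1$ forbidden colours at $v$); the associated bad events have bounded dependency, so the Lovász Local Lemma — exactly as in Reed's proof that $\chi\le\lceil\tfrac12(\Delta+1+\omega)\rceil$ — lets all the desired outcomes hold simultaneously, and the few sparse vertices left uncoloured can then be finished greedily. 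Pushing the concentration inequalities and the Local Lemma through with explicit constants is what makes the hypothesis $\Delta\ge10^{14}$ necessary; the output is a proper partial $(\Delta-1)$-colouring that already covers all of $S$ and is robust to a bounded amount of later recolouring.

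For the dense side I would first prove a clustering lemma: if $v$ is dense then $N[v]$ is within $O(\varepsilon\Delta^2)$ edges of being complete, and dense vertices whose near-cliques overlap substantially must be merged, yielding a partition of $D$ into clusters $C_1,\dots,C_k$, each inducing a graph missing only $O(\varepsilon\Delta^2)$ edges, each of size at most $\Delta$, and — since $G$ has no $K_\Delta$ — each of size $\Delta$ missing at least one edge. A counting argument then shows the total number of edges leaving a cluster is at most roughly the number of edges it is missing internally, so each $C_i$ is an \emph{approximate module}, interacting with the rest of $G$ only through a set of $O(\varepsilon\Delta^2)$ ``boundary'' edges. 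One colours $S$ first, then extends to the clusters one at a time. An isolated cluster of size $m_i\le\Delta$ would be trivial to colour with $m_i-1\le\Delta-1$ colours; the genuine difficulty is a size-$\Delta$ cluster, where the one missing edge must be used (its two ends sharing a colour, everything else distinct), leaving no surplus to dodge the colours already spent on the cluster's few external neighbours. I would resolve this by phrasing the colour assignment to the boundary vertices as finding an independent transversal / system of distinct representatives and invoking Haxell's theorem, which applies because each boundary vertex still has $\Delta-O(\varepsilon\Delta^2)$ admissible colours while the number of competing boundary vertices is only $O(\varepsilon\Delta^2)$.

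The main obstacle — and where essentially all the work lies — is this last interface: simultaneously colouring the dense clusters while respecting both the precoloured sparse set and the handful of edges between distinct clusters, and choosing $\varepsilon$ small enough relative to the slack produced by the nibble yet large enough to drive the clustering lemma. Everything else is either classical (Brooks' Theorem, criticality, the degree bound) or a now-standard application of the probabilistic method; it is the bookkeeping that ties these pieces together, and the resulting chain of inequalities, that forces $\Delta$ to be taken astronomically large.
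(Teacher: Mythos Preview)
The paper does not prove this theorem at all: it is quoted verbatim as Reed's result \cite{ReedBK} and used only as background motivation, with no argument supplied beyond the remark that Reed's own analysis could be tightened to about $10^3$. There is therefore nothing in the paper to compare your proposal against.

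That said, what you have written is a recognisable high-level outline of Reed's actual method (dense/sparse decomposition, semi-random colouring of the sparse part via the Local Lemma, clustering of dense vertices into near-cliques, and a transversal/SDR argument at the interface). A couple of points of caution if you intend to flesh it out. First, for saving a single colour the customary threshold for ``sparse'' is a \emph{linear} number $c\Delta$ of non-edges in $N(v)$, not $\varepsilon\Delta^2$; with a quadratic threshold your clustering lemma is easy but the Local Lemma step on the sparse side no longer gives you slack at merely ``barely sparse'' vertices. Second, the order of operations in Reed's paper is not quite ``colour $S$ first, then extend to clusters'': one first fixes, for each dense cluster, which pair of vertices will share a colour (this is where the absence of $K_\Delta$ is used), contracts those pairs, and then runs the probabilistic colouring on the whole resulting graph, so that the slack at sparse vertices and the identified pairs in dense clusters are handled in a single Local Lemma application. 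Your two-phase description would require an additional argument to show the partial colouring of $S$ can always be extended over a tight cluster, and Haxell's theorem as you invoke it does not obviously deliver this when a cluster has size exactly $\Delta$ and only one missing edge. These are fixable, but they are exactly the ``bookkeeping'' you flag as the hard part, and your sketch does not yet pin them down.
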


Reed stated that a more careful analysis of his argument could reduce the lower
bound to about $10^3$, but definitely not to $10^2$. 

In 1998, Reed also
conjectured  \cite{Reed} that $\chi(G)\leq \lceil (\omega(G)+\Delta(G)+1)/2\rceil$. 
This conjecture, now called Reed's Conjecture, is weaker than the Borodin--Kostochka Conjecture
when $\omega(G)\in \{\Delta(G)-1, \Delta(G)-2\}$, equivalent to it when 
$\omega(G)\in \{\Delta(G)-3, \Delta(G)-4\}$, and stronger otherwise.
Chudnovsky, Karthick, Maceli, and Maffray~\cite{CKMM19} proved Reed's Conjecture for all
$(P_5,\textrm{gem})$-free graphs.  In this note we use their structure theorem for
$(P_5,\text{gem})$-free graphs, as well as one of their key lemmas
\cite[][Theorem 3 and Lemma 2]{CKMM19}, to prove the Borodin--Kostochka
Conjecture for the same class of graphs.

\begin{mainthm} 
Let $G$ be a $(P_5,\text{gem})$-free graph. If $\Delta(G)\geq 9$ and
$\omega(G)\leq \Delta -1$, then $\chi(G)\leq \Delta(G)-1$.
\end{mainthm}

\section{Definitions and Notation}
	
All of our graphs are finite and simple. 
Most of our definitions follow \cite{West}, but we highlight a few terms.
By \emph{coloring} we mean a
proper vertex coloring. A graph is \emph{$k$-colorable} (or has a
\emph{$k$-coloring}) if there exists a coloring $\varphi : V(G) \rightarrow
\{1,\dots, k\}$. 
Given two graphs $S$ and $T$, the \emph{join}, denoted $S\vee
T$ is formed from their disjoint union by adding all edges with one endpoint in
$S$ and the other in $T$.
For graphs $H_1, \dots, H_s$, a graph $G$ is $(H_1, \dots, H_s)$-free if
$G$ does not contain any of $H_1, \dots, H_s$ as an induced subgraph. Thus,
the class of $(P_5, \text{gem})$-free graphs is all graphs that do not
contain an induced $P_5$ or an induced $\text{gem}$. 
Here \textit{gem} is $K_1\vee P_4$; see Figure 2.

\begin{figure}[!h]
\centering
  \begin{tikzpicture}
   \tikzset{every node/.style=uStyle}
   \begin{scope}[xshift=0in, yshift=-0.25in, scale=.5]
   \node (a) at (-4,0) {}; 
   \node (b) at (-2,0) {};
   \node (c) at (0,0) {}; 
   \node (d) at (2,0) {}; 
   \node (e) at (4,0) {}; 
   \draw[thick, black] (a)--(b)--(c)--(d)--(e);
   \node[lStyle, inner sep=0pt] (p5) at (0,-1.75) {\small$P_5$};
   \end{scope}
 
   \begin{scope}[xshift=1.8in, scale=.5, yshift=0in]
   \node (a) at (-2,0) {};
   \node (b) at (0,0) {}; 
   \node (c) at (2,0) {};
   \node (d) at (4,0) {};
   \node (e) at (1,-2) {};
   \draw[thick, black] (a)--(b)--(c)--(d) (a)--(e) (b)--(e) (c)--(e) (d)--(e);  
   
   \draw node[lStyle, inner sep=0pt] at ([shift=({-90:.4in})]e) {\small$\text{gem}$};
   \end{scope}
  \end{tikzpicture}
\caption{A $P_5$ and a gem.}
 \label{fig:fig2}
\end{figure}
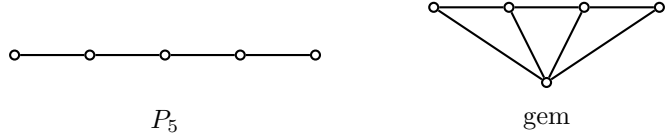

\indent We use the following notation from
~\cite{CKMM19}. Let $G$ be a graph and $X, Y\subseteq V(G)$. Let
$[X, Y ]$ denote the set of edges that have one end in $X$ and other end in
$Y$. If every vertex in $X$ is adjacent to every vertex in $Y$, then $X$
is \textit{complete} to $Y$ or $[X, Y ]$ is complete; if $[X, Y]=\emptyset$,
then $X$ is \textit{anticomplete} to $Y$. A set $X$ is a \textit{homogeneous
set} if every vertex with a neighbor in $X$ is complete to $X$. 

An \emph{expansion} of a graph $H$ is any graph $G$ such that $V(G)$ can be
partitioned into $\card{V(H)}$ non-empty sets $Q_v$, for each $v\in
V(H)$, such that $[Q_u,Q_v]$ is complete if $uv \in E(H)$, and
$[Q_u,Q_v]$ is anticomplete if $uv\not\in E(H)$. An expansion of a
graph is a \textit{clique expansion} if each $Q_v$ is a clique, and is a
\textit{$P_4$-free expansion} if each $Q_v$ induces a $P_4$-free graph.

For each $i\in\{1,\ldots,10\}$, the class of graphs $\G_i$ is all the $P_4$-free
expansions of the single graph shown in Case $i$ below (in
Figures~\ref{case123-fig}--\ref{case910H-fig}), where each $Q_i$ is a single
vertex.  The class $\G_i^*$ is all clique expansions of the graph in Case $i$.
The classes $\HH$ and $\HH^*$ are defined nearly analogously for the graph in
Case~11.  (There is a slight difference, which we explain when we
handle Case~11.)


\section{Lemmas and Reductions}

In this section, we present lemmas that we will 
use to prove our Main Theorem.
We will assume the Main Theorem is false and choose $G$ to be a
counterexample that is \emph{vertex-critical}; that is $\chi(G-v)<\chi(G)$ for
each $v\in V(G)$.  Ultimately, we reach a contradiction, by constructing a
$(\Delta(G)-1)$-coloring of $G$.  To construct this coloring, we repeatedly use
the following easy lemma.

\begin{lemma}
\label{degen-lem}
Fix $k\in \mathbb{Z}^+$.  Let $G$ be a graph and let $I_1, \dots, I_t$ be
pairwise disjoint independent sets of $G$. If $G-\bigcup_{j=1}^t
I_j$ is $(k-t-1)$-degenerate, then $\chi(G)\le k$.
\end{lemma}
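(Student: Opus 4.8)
The plan is to prove Lemma~\ref{degen-lem} directly by greedily extending a coloring, using the independent sets as ``free'' color classes and then coloring the degenerate remainder in reverse degeneracy order. First I would set $H = G - \bigcup_{j=1}^t I_j$ and recall that an $(k-t-1)$-degenerate graph admits an ordering $v_1, \dots, v_m$ of $V(H)$ in which each $v_i$ has at most $k-t-1$ neighbors among $v_1, \dots, v_{i-1}$ (equivalently, each $v_i$ has back-degree at most $k-t-1$ in this ordering). The idea is to assign color $j$ to every vertex of $I_j$ for $j = 1, \dots, t$; since each $I_j$ is independent this is proper on $G[\bigcup I_j]$, and the $I_j$ are pairwise disjoint so no conflict arises between different classes. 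These $t$ colors are used only on $\bigcup_{j=1}^t I_j$.

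Next I would color the vertices of $H$ one at a time, processing them in the order $v_m, v_{m-1}, \dots, v_1$ (reverse of the degeneracy order), using the color palette $\{t+1, \dots, k\}$, which has $k - t$ colors available. When it is time to color $v_i$, its already-colored neighbors are of two kinds: neighbors lying in some $I_j$, which may block at most $t$ colors (namely some subset of $\{1, \dots, t\}$, none of which we are using for $H$ anyway), and neighbors in $H$ that were colored earlier in this phase, i.e.\ neighbors among $v_{i+1}, \dots, v_m$. By the choice of ordering, $v_i$ has at most $k - t - 1$ neighbors among $v_1, \dots, v_{i-1}$; but that is the wrong side, so instead I would run the degeneracy order forwards: take an ordering $v_1, \dots, v_m$ where each $v_i$ has at most $k-t-1$ \emph{later} neighbors is not standard, so cleaner is to say $H$ is $(k-t-1)$-degenerate hence there is an ordering with each vertex having at most $k-t-1$ earlier neighbors, and we color $v_1, v_2, \dots, v_m$ in that order. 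Then when coloring $v_i$, at most $k-t-1$ of its neighbors in $H$ are already colored, so at most $k-t-1$ colors from $\{t+1, \dots, k\}$ are forbidden by $H$-neighbors; since $|\{t+1,\dots,k\}| = k-t > k-t-1$, at least one color remains, and we use it. Colors $1, \dots, t$ never conflict because no vertex of $H$ ever receives one.

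Once every vertex of $H$ is colored from $\{t+1, \dots, k\}$ and every vertex of $\bigcup I_j$ is colored from $\{1, \dots, t\}$, we have a proper coloring of all of $G$ using colors from $\{1, \dots, k\}$: edges inside $\bigcup I_j$ are properly colored because each $I_j$ is independent and distinct sets get distinct colors; edges inside $H$ are properly colored by the greedy step; and edges between $H$ and $\bigcup I_j$ are properly colored because the two endpoints draw from disjoint palettes. Hence $\chi(G) \le k$, as claimed.

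There is essentially no hard part here: the only thing to be careful about is the bookkeeping of palette sizes --- we must verify that after reserving $t$ colors for the independent sets, the remaining $k - t$ colors strictly exceed the degeneracy bound $k - t - 1$, so the greedy step never gets stuck --- and the observation that the independent-set colors and the degenerate-part colors can be kept on disjoint palettes so that cross edges cause no trouble. I would present this in two or three sentences in the actual proof.
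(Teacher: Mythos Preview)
Your proposal is correct and takes essentially the same approach as the paper: color $H=G-\bigcup I_j$ with $k-t$ colors using its $(k-t-1)$-degeneracy, then use $t$ fresh colors for the sets $I_1,\dots,I_t$. The paper simply invokes the standard fact that a $(k-t-1)$-degenerate graph is $(k-t)$-colorable rather than spelling out the greedy argument; for your write-up, drop the mid-proof detour about reversing the degeneracy order and state the forward ordering directly.
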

\begin{proof}
Let $G$, $k$, and $I_1, \dots, I_t$ satisfy the hypotheses.  
If $G-\bigcup_{j=1}^t I_j$ is $(k-t-1)$-degenerate, then it has a
$(k-t)$-coloring $\varphi'$. We extend $\varphi'$ to a $k$-coloring of $G$ by
giving each $I_j$ its own color. 
\end{proof}

We will use Lemma~\ref{degen-lem} to $8$-color $G$, typically with $t=2$. 
Let $G':=G-\bigcup_{j=1}^2 I_j$.
To prove that $G'$ is 5-degenerate we give a vertex order
$\sigma=(v_1,\dots,v_n)$ such that each $v_i$ has at most 5 neighbours
earlier in $\sigma$. For a vertex partition $S_1\uplus \cdots\uplus
S_t$ of $V(G)$ we write $(S_1, \dots, S_t)$ to denote a vertex order
$\sigma$ where all vertices in $S_i$ come before all vertices in
$S_{i+1}$, for each $i$, and vertices within each
$S_i$ are ordered arbitrarily.

\begin{lemma}
\label{8-9-lem}
Every vertex-critical counterexample $G$ to the Borodin--Kostochka Conjecture
has $\delta(G)\ge \Delta(G)-1$.  In particular,
$\delta(G)\ge8$ and $\card{d(v)-d(w)}\leq 1$ for all $v,w\in V(G)$.
\begin{proof}
Assume that $G$ is a vertex-critical counterexample to the the Borodin--Kostochka
Conjecture.  Suppose there exists $v\in V(G)$ such that $d(v)<\Delta(G)-1$.
Since $G$ is vertex-critical, $G-v$ has a $(\Delta(G)-1)$-coloring $\vph$.
To extend $\vph$ to $v$, we simply color $v$ with a color unused on $N_G(v)$.
\end{proof}
\end{lemma}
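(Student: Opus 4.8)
The plan is a short greedy-extension argument. First I would fix the setup: let $G$ be a vertex-critical counterexample to the Borodin--Kostochka Conjecture, so $\Delta(G)\ge 9$, $\omega(G)\le\Delta(G)-1$, and $\chi(G)>\Delta(G)-1$. Before touching degrees, I would record that $\chi(G)\le\Delta(G)$: since $\omega(G)\le\Delta(G)-1<\Delta(G)$ and $\Delta(G)\ge 9\ge 3$, this is exactly Brooks' Theorem as stated above. (It also follows that $\chi(G)=\Delta(G)$, but only the upper bound is needed.)

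For the main step, suppose for contradiction that $\delta(G)\le\Delta(G)-2$ and pick $v\in V(G)$ with $d(v)\le\Delta(G)-2$. Since $G$ is vertex-critical, $\chi(G-v)<\chi(G)\le\Delta(G)$, so $G-v$ has a $(\Delta(G)-1)$-coloring $\vph$. Now extend $\vph$ to $v$: the neighborhood $N_G(v)$ has at most $\Delta(G)-2$ vertices, hence at most $\Delta(G)-2$ of the $\Delta(G)-1$ colors are used on $N_G(v)$, and some color is free for $v$. This produces a $(\Delta(G)-1)$-coloring of $G$, contradicting $\chi(G)>\Delta(G)-1$. Hence $\delta(G)\ge\Delta(G)-1$.

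The two ``in particular'' claims then follow with no work: $\Delta(G)\ge 9$ gives $\delta(G)\ge\Delta(G)-1\ge 8$, and for any $v,w\in V(G)$ the bounds $\Delta(G)-1\le d(v),d(w)\le\Delta(G)$ give $\card{d(v)-d(w)}\le 1$.

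Every step is elementary, so there is no real obstacle. The one point worth care is applying Brooks' Theorem at the outset: vertex-criticality by itself only yields a $(\chi(G)-1)$-coloring of $G-v$, and we need $\chi(G)-1\le\Delta(G)-1$ for the greedy extension to stay within $\Delta(G)-1$ colors.
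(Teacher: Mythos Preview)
Your argument is correct and matches the paper's proof essentially line for line: suppose a vertex of degree at most $\Delta(G)-2$ exists, use criticality to $(\Delta(G)-1)$-color $G-v$, and greedily extend. Your explicit appeal to Brooks' Theorem to justify that $\chi(G-v)\le\Delta(G)-1$ (rather than merely $\chi(G)-1$) is a detail the paper's proof leaves implicit, so if anything your version is slightly more careful.
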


We want to reuse the idea in the proof of Lemma~\ref{8-9-lem} to show that
other induced subgraphs cannot appear in $G$.  For this, we use notation and a
lemma from~\cite{CR13}.  For a graph $H$, a \emph{$d_1$-assignment} $L$ gives to each
$v\in V(H)$ a set (of allowable colors) $L(v)$ such that $|L(v)|=d_H(v)-1$. 
A graph $H$ is \emph{$d_1$-choosable} if $H$ has a proper coloring $\vph$ with
$\vph(v)\in L(v)$ for every $d_1$-assignment $L$.

\begin{lemma}[\cite{CR13}]
\label{d1-choosable-lem}
If $G$ is vertex-critical and $\chi(G)=\Delta(G)$, then $G$ cannot contain
any non-empty, $d_1$-choosable,
induced subgraph $H$.  So such a $G$ contains neither of the
following as induced subgraphs: 
\begin{itemize}
\item $K_3\vee 3K_2$ or
\item $K_4\vee H$, where $V(H)$ contains two disjoint pairs of nonadjacent vertices.
\end{itemize}
\end{lemma}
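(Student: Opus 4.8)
The substantive content is the first sentence; the two bulleted graphs then follow at once, once one knows they are $d_1$-choosable. I would prove the general statement by a direct extension argument. Assume $G$ is vertex-critical with $\chi(G)=\Delta(G)=:\Delta$ and that $H$ is a non-empty $d_1$-choosable induced subgraph of $G$. Since $G$ is vertex-critical and $V(H)\neq\emptyset$, the graph $G-V(H)$ has a proper $(\Delta-1)$-coloring $\varphi$. Fix $v\in V(H)$: outside $H$, the vertex $v$ has $d_G(v)-d_H(v)\le\Delta-d_H(v)$ neighbors, so at most $\Delta-d_H(v)$ colors of $\{1,\dots,\Delta-1\}$ are forbidden at $v$ by $\varphi$, leaving at least $(\Delta-1)-(\Delta-d_H(v))=d_H(v)-1$ colors available at $v$. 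Let $L(v)$ be any $(d_H(v)-1)$-subset of the available colors (this makes sense, as a $d_1$-choosable graph has minimum degree at least $2$). Then $L$ is a $d_1$-assignment for $H$, so $H$ has a proper $L$-coloring $\psi$; since each $\psi(v)\in L(v)$ avoids every color $\varphi$ uses on the neighbors of $v$ outside $H$, the map $\varphi\cup\psi$ is a proper $(\Delta-1)$-coloring of $G$, contradicting $\chi(G)=\Delta$.

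For the two bullets, since $K_3\vee 3K_2$ and $K_4\vee H$ (with $V(H)$ containing two disjoint nonadjacent pairs) are both non-empty with minimum degree at least $2$, it suffices to verify that each is $d_1$-choosable. For $K_3\vee 3K_2$, write the triangle as $a_1,a_2,a_3$ and the matching as $b_ib_i'$ for $i\in\{1,2,3\}$, so that in a $d_1$-assignment each $a_j$ gets a list of size $7$ and each $b_i,b_i'$ a list of size $3$. I would color the six vertices of $3K_2$ first, choosing the coloring so as to keep each $L(a_j)$ large: each edge $b_ib_i'$ contributes at most two colors, and whenever a pair's combined palette contains colors lying outside $\bigcup_j L(a_j)$ one spends them there, so that afterwards each $L(a_j)$ retains at least $3$ usable colors. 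Since $K_3$ is properly list-colorable as soon as all three lists have size at least $3$ (Hall's condition is immediate), the triangle can then be colored, finishing the coloring.

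For $K_4\vee H$, write $K_4=\{c_1,c_2,c_3,c_4\}$, put $n=\card{V(H)}$ (so each $c_i$ gets a list of size $n+2$ and each $v\in V(H)$ a list of size $d_H(v)+3$), and let $\{x_1,x_2\}$ and $\{y_1,y_2\}$ be the two disjoint nonadjacent pairs in $H$. The plan is to color $H$ using at most $n-2$ colors---by giving $x_1,x_2$ a common color and $y_1,y_2$ a common color, then coloring the rest of $H$ greedily---and then to color $c_1,\dots,c_4$ greedily, each of which sees at most $(n-2)+3=n+1<n+2$ colors. The main obstacle, and the delicate point of the whole lemma, is realizing the two ``savings'': a nonadjacent pair with disjoint lists cannot be merged directly, and one must instead exploit the slack this creates (the pair's combined palette is then very large)---for instance by coloring part of $K_4$ first to manufacture the required list overlaps, or by a short case analysis on which of the two pairs have intersecting lists. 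This $d_1$-choosability bookkeeping for the two named graphs is exactly what is carried out in \cite{CR13}, and here we simply invoke it.
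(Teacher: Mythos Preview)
Your argument for the first statement is essentially identical to the paper's: delete $V(H)$, $(\Delta-1)$-color $G-V(H)$ by vertex-criticality, observe that each $v\in V(H)$ retains at least $d_H(v)-1$ available colors, and extend via $d_1$-choosability. For the two bulleted graphs the paper, like you, simply invokes \cite{CR13} (specifically Lemmas~3.3 and~3.10 there) rather than reproving $d_1$-choosability; your additional heuristic sketches go beyond what the paper provides but, as you yourself note, are not complete proofs and are not needed once the citation is in place.
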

\begin{proof}
We begin with the first statement.  Suppose, to the contrary, that $G$ contains
an induced subgraph $H$, where $G$ and $H$ satisfy the hypotheses. 
Since $G$ is vertex-critical, $G-H$ has a $(\Delta(G)-1)$-coloring $\vph$.
For each $v\in V(H)$, let $L(v)$ denote the
colors in $\{1,\ldots,\Delta(G)-1\}$ that are unused by $\vph$ on $N_G(v)$.  Thus,
$|L(v)|\ge \Delta(G)-1-(d_G(v)-d_H(v))\ge d_H(v)-1$.  Now we can extend $\vph$
to $H$ precisely because $H$ is $d_1$-choosable.  This gives a
$(\Delta(G)-1)$-coloring of $G$, which is a contradiction; this proves the first
statement.
The second statement follows from \cite[][Lemmas 3.3 \&
3.10]{CR13}, where it is proved that the two subgraphs listed above are $d_1$-choosable. 
\end{proof}

\begin{copycatlem}
\label{copycat}
Let $G$ be a vertex-critical counterexample to the Borodin--Kostochka
Conjecture. If $G$ contains nonempty, disjoint homogeneous sets $A$ and $B$
such that $A$ and $B$ are cliques and $N(A)\subseteq N(B)$, then $\card{A}
>\card{B}$.
\end{copycatlem}
\begin{proof}
Let $G$, $A$, and $B$ satisfy the hypotheses. Suppose, for the sake of
contradiction, that $\card{A}\leq \card{B}$. Since $G$ is vertex-critical,
$G- A$ has a $(\Delta(G)-1)$-coloring $\varphi$. Since $\card{A}\leq
\card{B}$, $N(A)\subseteq N(B)$, and $B$ is a homogeneous set, we can extend
$\vph$ to $G$ by coloring $A$ with colors used on $B$.  Thus, $G$ is not a
counterexample to the Borodin--Kostochka Conjecture, a contradiction.
\end{proof}

A graph class $\G$ is \textit{hereditary} if for each $G\in\G$ we have $H\in
\G$ for each induced subgraph $H$ of $G$. Every class of graphs characterized
by a list of forbidden induced subgraphs is a hereditary class; in particular,
the class of $(P_5, \text{gem})$-free graphs is hereditary.

\begin{theorem}[Kostochka \cite{Ko}, Catlin \cite{Ca}] \label{catlin-reduc-lem}
Let $\G$ be a hereditary class of graphs.  If the Borodin--Kostochka Conjecture
is false for some $G\in \G$, then it is false for some $G\in \G$ with $\Delta(G)=9$.
\end{theorem}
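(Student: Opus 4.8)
The plan is a minimal-counterexample argument, exploiting the hereditary hypothesis to pass freely to induced subgraphs. Suppose the Borodin--Kostochka Conjecture fails for some graph in $\G$, and among all counterexamples in $\G$ choose one, $G$, with $\Delta(G)$ as small as possible; write $k:=\Delta(G)$, so $k\ge 9$, and our goal is to prove $k=9$. First I would note that $\chi(G)=k$: by definition of counterexample $\chi(G)\ge\Delta(G)$, while $\omega(G)\le k-1\le\Delta(G)$ together with $k\ge 3$ gives $\chi(G)\le\Delta(G)$ by Brooks' Theorem. Next, by repeatedly deleting vertices $v$ with $\chi(G-v)=\chi(G)$, I would reduce to the case that $G$ is vertex-critical; this keeps $G$ inside $\G$, and it cannot decrease $\Delta(G)$, since any graph with chromatic number $k$ and maximum degree less than $k$ has clique number at least $k$ (apply Brooks' Theorem to a component of largest chromatic number), contradicting $\omega(G)\le k-1$. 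Lemma~\ref{8-9-lem} then gives $\delta(G)\ge k-1$. Now assume, for contradiction, that $k\ge 10$.

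The engine of the reduction is the fact that \emph{if $S$ is a nonempty independent set in a vertex-critical graph $G$, then $\chi(G-S)=\chi(G)-1$}: for the upper bound, $G-S\subseteq G-v$ for any $v\in S$, so $\chi(G-S)\le\chi(G-v)=\chi(G)-1$; for the lower bound, color $G-S$ optimally and give all of $S$ one new color. Consequently, it suffices to find a nonempty independent set $S$ with $\Delta(G-S)\le k-1$ and $\omega(G-S)\le k-2$. Indeed, then $\chi(G-S)=k-1$, and either $\Delta(G-S)=k-1$, so that $G-S\in\G$ is a counterexample with maximum degree $k-1<k$, contradicting the minimality of $k$; or $\Delta(G-S)\le k-2$, in which case $k-1=\chi(G-S)\le\Delta(G-S)+1$ forces $\Delta(G-S)=k-2$, so Brooks' Theorem gives $\omega(G-S)\ge k-1$, contradicting $\omega(G-S)\le k-2$. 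So the whole problem reduces to producing such an $S$.

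Let $D$ be the set of vertices of degree exactly $k$; it is nonempty since $\Delta(G)=k$, and if $S$ dominates $D$ then $\Delta(G-S)\le k-1$, because every vertex outside $D$ already has degree at most $k-1$. When $\omega(G)\le k-2$, I would take $S$ to be a maximal independent subset of $G[D]$: it is nonempty, its maximality forces it to dominate $D$, and $\omega(G-S)\le\omega(G)\le k-2$ is automatic. By the previous paragraph this immediately yields a contradiction, so the case $\omega(G)\le k-2$ is complete.

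The hard part --- and, I expect, the heart of the Kostochka--Catlin reduction --- is the case $\omega(G)=k-1$. To ensure $\omega(G-S)\le k-2$ the set $S$ must now meet every maximum clique of $G$, and a maximal independent subset of $G[D]$ need not do this, because a copy of $K_{k-1}$ in $G$ may consist entirely of vertices of degree $k-1$. What is needed is a single nonempty independent set $S$ that \emph{simultaneously} dominates the vertices of degree $k$ and intersects every maximum clique; once such an $S$ is found, the argument of the second paragraph closes the proof. Establishing that such an $S$ exists is the real obstacle, and I would expect it to rest on structural information about how the maximum cliques of a vertex-critical graph with $\chi=\Delta$ are positioned relative to its vertices of degree $k-1$ --- for instance Gallai's description of the subgraph induced by the vertices of degree $\chi-1$ --- together with the bound $\delta(G)\ge k-1$, which limits how many neighbors a vertex of a maximum clique can have outside that clique.
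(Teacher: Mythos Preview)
Your outline is on the right track and mirrors the paper's strategy closely: pick a minimum-$\Delta$ counterexample, pass to a vertex-critical graph, then peel off an independent set $S$ so that $G-S$ has $\Delta\le k-1$ and $\omega\le k-2$, and finish by induction/Brooks. Your treatment of the case $\omega(G)\le k-2$ and the endgame case analysis on $\Delta(G-S)$ are both correct.

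The genuine gap is exactly where you flag it: the case $\omega(G)=k-1$. You correctly identify that one needs an independent set that both dominates $G$ (to drop $\Delta$) and meets every $(k-1)$-clique (to drop $\omega$), but you do not produce such a set; speculating about Gallai's structure theorem is not a proof, and in fact that is not the route that works here. The paper closes this gap in one stroke by invoking King's theorem \cite{King}: since $\omega(G)=k-1>\tfrac{2}{3}(\Delta(G)+1)$ (which holds once $\Delta(G)\ge 6$), there exists an independent set meeting every maximum clique. Extend it to a maximal independent set $I$; then $I$ still hits every $(k-1)$-clique and now dominates $V(G)$, so $\Delta(G-I)\le k-1$ and $\omega(G-I)\le k-2$, and your second-paragraph argument finishes the job. (The paper phrases this as taking a \emph{maximum} independent set with the hitting property; a maximal one already suffices for the degree drop.) Without King's result or an equivalent hitting-set lemma, your proposal is incomplete precisely at the step you called ``the real obstacle.''
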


\begin{proof}
We assume the Borodin--Kostochka Conjecture is true for all $G\in\mathcal{G}$
with $\Delta(G)=9$ and show that it is true for all $G\in \mathcal{G}$.
Suppose instead that some $G\in \mathcal{G}$ with $\Delta(G)>9$
is a counterexample; in particular, $\omega(G)\le \Delta(G)-1$.  Among all such
$G$, choose one to minimize $\Delta(G)$.
We want to find a maximum independent set $I$ that intersects each clique of
size $\Delta(G)-1$.  If $\omega(G)<\Delta(G)-1$, then any
maximum independent set $I$ suffices.  Otherwise, $I$ is guaranteed by a
result of King~\cite{King}. Let $G':=G-I$.  
Note that $\omega(G')\le \Delta(G)-2$ and $\Delta(G')\le \Delta(G)-1$, since $I$
is maximum.

If $\Delta(G')\le \Delta(G)-3$, then we can greedily color $G'$ with at most
$\Delta(G)-2$ colors.  
By using a new color on $I$, we get a $(\Delta(G)-1)$-coloring of $G$, a contradiction.
If $\Delta(G')=\Delta(G)-2$, then $G'$ has a $(\Delta(G)-2)$-coloring by Brooks'
Theorem, since $\omega(G')\le \Delta(G)-2$.  By using a new color on $I$, we
again get a $(\Delta(G)-1)$-coloring of $G$, a contradiction.
So we must have $\Delta(G')=\Delta(G)-1$ and $\omega(G')\le \Delta(G)-2$.
Since $G'\in \mathcal{G}$ and $\Delta(G')<\Delta(G)$, we know that $G'$ is not a
counterexample to the Borodin--Kostochka Conjecture.
In particular $G'$ has a $(\Delta(G')-1)$-coloring.
By using a new color on $I$, we extend this coloring of $G'$ to a
$(\Delta(G)-1)$-coloring of $G$, a contradiction.
\end{proof}

We conclude this section with two results of Chudnovsky, Karthick, Maceli, and Maffray.

\begin{theorem}[{\cite[Theorem 3]{CKMM19}}]
\label{CKMM-thm1}
If $G$ is a connected $(P_5, \text{gem})$-free graph that contains an induced
$C_5$, then either $G\in \mathcal{H}$ or $G\in \G_i$, for some
$i\in\{1,\ldots,10\}$. 
\label{11-cases-thm}
\end{theorem}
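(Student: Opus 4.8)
The statement is quoted verbatim from \cite{CKMM19}, so nothing new is required for the present paper; but here is how one would prove such a structure theorem from scratch, and why it is long. Fix an induced $C_5$, say on $v_1v_2v_3v_4v_5$ in cyclic order, and for each vertex $w\notin\{v_1,\dots,v_5\}$ record its \emph{attachment type} $S(w):=N(w)\cap\{v_1,\dots,v_5\}$. The plan is: (i) use $P_5$-freeness to show that only a handful of attachment types occur; (ii) use gem-freeness to force the vertices of each surviving type into a homogeneous set and to force any two such sets to be complete or anticomplete to each other; (iii) pin down the vertices with $S(w)=\emptyset$ using connectivity together with $P_5$-freeness; and (iv) collate the boundedly many surviving configurations, which turn out to be exactly the families $\HH,\G_1,\dots,\G_{10}$.

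Step (i) is quick. If $S(w)=\{v_i\}$ then $wv_iv_{i+1}v_{i+2}v_{i+3}$ is an induced $P_5$ (indices mod $5$), and if $S(w)=\{v_i,v_{i+1}\}$ then $v_{i+2}v_{i+3}v_{i+4}v_iw$ is an induced $P_5$. Hence every vertex off the $C_5$ either is anticomplete to it, or is adjacent to a nonadjacent pair $\{v_i,v_{i+2}\}$ and to nothing else on the $C_5$ (so looks locally like $v_{i+1}$), or is adjacent to at least three of the five $v_j$. The heart of step (ii) is the observation that underlies the ``$P_4$-free expansion'' in the definition of $\G_i$: if a bag attached to some $v_j$ contained an induced $P_4$, then $v_j$, being adjacent to the whole bag, together with that $P_4$ would be a gem. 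The same mechanism, applied to a $C_5$-vertex adjacent to two bags at once, rules out most would-be adjacencies between bags and between bags and the vertices adjacent to three or more of the $v_j$.

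The laborious part, and the main obstacle, is finishing steps (ii)--(iv): one must analyze the vertices adjacent to three, four, or five vertices of the $C_5$, and simultaneously the vertices with $S(w)=\emptyset$. For the latter, connectivity provides a shortest path to the $C_5$; $P_5$-freeness forces this path to be short and forces $w$ to attach to $G$ as a homogeneous set hanging off one of the bags (this is the local form of the Bacs\'o--Tuza fact that a connected $P_5$-free graph has a dominating clique or a dominating induced $C_5$). Each resulting combination of which bags are nonempty, how the high-$C_5$-degree vertices attach, and where the far part hangs must then be checked to contain neither an induced $P_5$ nor a gem; the survivors are precisely the eleven families. Each individual forbidden-subgraph check is short, but there are many of them, which is why we simply invoke \cite[Theorem 3]{CKMM19}.
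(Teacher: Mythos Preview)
The paper does not prove this statement; it is simply quoted from \cite{CKMM19} and used as a black box, with no proof environment attached. You correctly recognize this at the outset, so there is nothing in the paper to compare your sketch against. Your outline of how the structure theorem is obtained---classifying attachment types to a fixed induced $C_5$, using $P_5$-freeness to kill small attachment sets, using gem-freeness to force bags to be $P_4$-free and homogeneous, and then case-checking the surviving configurations---is the standard approach and matches what \cite{CKMM19} actually does, so as a summary it is fine.
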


\begin{theorem}[\cite{CKMM19,CKMM18}]
\label{CKMM-thm2}
Fix $i\in \{1,\dots, 10\}$. For every $G\in\mathcal{G}_i$ (resp.
$G\in\mathcal{H}$) there is $G^*\in\mathcal{G}_i^*$ (resp.
$G^*\in\mathcal{H}^*$) such that $\omega(G) = \omega(G^*)$ and $\chi(G) =
\chi(G^*)$. Further, $G^*$ is an induced subgraph of $G$. 
\label{Gi*-thm}
\end{theorem}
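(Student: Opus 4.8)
The plan is to construct $G^*$ directly from $G$ by shrinking each part of the expansion down to a maximum clique inside it. Write $H$ for the base graph of Case $i$, so that $G$ is a $P_4$-free expansion with $V(G)=\biguplus_{v\in V(H)}Q_v$, each $G[Q_v]$ a $P_4$-free graph, $[Q_u,Q_v]$ complete when $uv\in E(H)$, and $[Q_u,Q_v]$ anticomplete otherwise. For each $v$ I would fix a maximum clique $Q_v^*\subseteq Q_v$ of $G[Q_v]$; this is nonempty since $Q_v$ is, and it has size $\omega(G[Q_v])$. Set $G^*:=G\bigl[\bigcup_v Q_v^*\bigr]$. By construction each $Q_v^*$ is a clique, and the complete/anticomplete relations between the parts are inherited from $G$, so $G^*$ is a clique expansion of $H$, i.e.\ $G^*\in\mathcal{G}_i^*$, and it is an induced subgraph of $G$ by definition. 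This already settles the last sentence of the statement.

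Next I would verify $\omega(G)=\omega(G^*)$ using the clique structure of expansions. Any clique $K$ of $G$ meets a set $S=\{v:K\cap Q_v\neq\emptyset\}$ of parts, and $S$ must be a clique of $H$, since two distinct parts met by a common clique are complete to each other and hence adjacent in $H$; moreover $\card{K\cap Q_v}\le\omega(G[Q_v])=\card{Q_v^*}$. Conversely, for any clique $S$ of $H$ the set $\bigcup_{v\in S}Q_v^*$ is a clique of $G^*$. Ranging over cliques $S$ of $H$, this gives $\omega(G)=\max_S\sum_{v\in S}\omega(G[Q_v])=\max_S\sum_{v\in S}\card{Q_v^*}=\omega(G^*)$.

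The heart of the argument is $\chi(G)=\chi(G^*)$; as $G^*$ is an induced subgraph, only $\chi(G)\le\chi(G^*)$ needs work. Here I would use that $P_4$-free graphs are perfect, so $\chi(G[Q_v])=\omega(G[Q_v])=\card{Q_v^*}$ for every $v$. Take an optimal proper coloring of $G^*$ with $\chi(G^*)$ colors. Because $Q_v^*$ is a clique, it receives a set $S_v$ of exactly $\card{Q_v^*}$ distinct colors, and $S_u\cap S_v=\emptyset$ whenever $uv\in E(H)$, since those parts are complete. As $\card{S_v}=\chi(G[Q_v])$, I can recolor each $G[Q_v]$ properly using only colors from $S_v$. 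The union of these colorings is a proper coloring of $G$: it is proper inside each part, it avoids conflicts across complete pairs because $S_u\cap S_v=\emptyset$, and anticomplete pairs carry no edges. This coloring uses no color outside $\{1,\dots,\chi(G^*)\}$, giving $\chi(G)\le\chi(G^*)$ and hence equality.

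The step I expect to require the most care is this $\chi(G)\le\chi(G^*)$ direction: it hinges on matching the size of each color set $S_v$ to $\chi(G[Q_v])$, which is precisely where perfection of the cograph parts is used, and on checking that the locally chosen colorings glue into one global proper coloring. The only other wrinkle is Case~11, where $\mathcal{H}$ and $\mathcal{H}^*$ are defined with a slight difference from the $\mathcal{G}_i,\mathcal{G}_i^*$. There I would run the identical ``shrink each part to a maximum clique'' construction and re-verify the two identities $\omega(G)=\omega(G^*)$ and $\chi(G)=\chi(G^*)$ against that modified definition; I expect this to go through unchanged, since the argument only uses the complete/anticomplete structure of expansions together with the perfection of the $P_4$-free parts.
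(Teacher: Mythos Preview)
The paper does not prove this theorem; it is quoted from \cite{CKMM19,CKMM18} and used as a black box, so there is no in-paper proof to compare against. Your argument for the $\mathcal{G}_i$ cases is correct and is essentially the standard one: shrink each $P_4$-free part to a maximum clique and use perfection of cographs to lift an optimal coloring of $G^*$ back to $G$.

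Where your plan would \emph{not} go through ``unchanged'' is Case~11. In $\mathcal{H}^*$ the part $A_6$ is not required to be a clique, and the relation between $A_6$ and $A_7$ is not a plain complete/anticomplete pair: each component $C$ of $A_7$ is a homogeneous set whose neighborhood is some (possibly proper) subset of $A_6$. If you shrink $A_6$ to a maximum clique $A_6^*$, two things can go wrong. First, a component $C$ of $A_7$ whose neighbors all lie in $A_6\setminus A_6^*$ becomes isolated from the rest, and $\mathcal{H}$ requires connectedness. Second, in your lifting step you recolor $G[A_6]$ from the palette $S_{A_6}$, but a vertex $w\in A_6\setminus A_6^*$ adjacent to $C$ may land on a color in $S_C$, since $S_C$ was only guaranteed to avoid the colors on $N(C^*)\cap A_6^*$, not all of $S_{A_6}$; this creates a conflict on the edge $wC$. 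The repair is easy: leave $A_6$ untouched and shrink only $A_1,\ldots,A_5$ and each component of $A_7$ to maximum cliques. Then $G^*\in\mathcal{H}^*$, the neighborhood in $A_6$ of every component of $A_7$ is identical in $G$ and $G^*$, and both the $\omega$ and $\chi$ identities follow by the same perfection argument you gave.
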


\section{Proof of Main Result}

\begin{mainthm} 
Let $G$ be a $(P_5,\text{gem})$-free graph.
If $\Delta(G)\geq 9$ and $\omega(G)\leq \Delta(G) -1$, then $\chi(G)\leq \Delta(G)-1$.
\end{mainthm}

\begin{proof} 
Suppose the theorem is false.  Let $G$ be a counterexample that minimizes $\Delta(G)$.
Further, we can choose $G$ to be vertex-critical.  By Theorem~\ref{catlin-reduc-lem}, 
we can assume that $\Delta(G)=9$.
Next we show that we can also assume that either $G\in \G^*_i$ for some
$i\in\{1,\ldots,10\}$ or $G\in \HH^*$.

If $G$ is perfect, then $\chi(G)=\omega(G)\le \Delta(G)-1$, a contradiction.
Since $G$ is not perfect, the Strong Perfect Graph Theorem \cite{SPGT} 
implies that $G$ must contain an odd hole or odd anti-hole (that is, either $G$
or its complement contains an induced odd cycle of length at least 5).
Every odd hole of length at least 7 contains a $P_5$ as an induced subgraph.
Similarly, every odd antihole of length at least 7 contains a gem as an induced
subgraph.  Since $G$ is $(P_5,\textrm{gem})$-free, $G$ must contain a hole or
antihole of length 5.  In fact, these are both congruent to $C_5$.  So, by
Theorem~\ref{11-cases-thm}, either $G\in G_i$ for some $i\in\{1,\ldots,10\}$ or
$G\in \HH$.  Finally, by Theorem~\ref{Gi*-thm}, we can assume that either $G\in
\G_i^*$ or $G\in \HH^*$.  This is because $G$ is vertex-critical.  Since
$\chi(G^*)=\chi(G)$, we conclude that $G^*\cong G$.
For each $Q_i$, we write $x_i$, $x_i'$, $x_i''$, and $x_i'''$ to denote
arbitrary distinct vertices in $Q_i$ (provided that such vertices exist).
For independent sets $I_1$ and $I_2$ and $G-(I_1\cup I_2)$, we write
$(Q'_{j_1},Q'_{j_2},Q'_{j_3},\ldots)$ to denote the vertex order 
$(Q_{j_1},Q_{j_2},Q_{j_3},\ldots)$ restricted to $V(G)\setminus (I_1\cup I_2)$.
Now we consider these 11 cases in succession.  Each case is independent of all
others.

\textbf{Case 1: $\bm{G\in\mathcal{G}_1^*}$.}  
Since $\Delta(G)=9$, there exists $i\in\{1,\dots,5\}$ such that
$\card{Q_i}\geq 4$. By symmetry, assume $i=1$. If $\card{Q_5}\geq 2$ and
$\card{Q_2}\geq 2$, then $G[\{x_1, x_1', x_1'', x_1''', x_2, x_2', x_5, x_5'\}]
\cong K_4\vee H$, where $H$ contains two disjoint pairs of nonadjacent
vertices; this subgraph is $d_1$-choosable, which contradicts
Lemma~\ref{d1-choosable-lem}.  Assume instead, by symmetry, that
$\card{Q_5}=1$.  
Lemma~\ref{8-9-lem} implies that
$1\ge d(x_2)-d(x_1)=|N[x_2]|-|N[x_1]|=|Q_3|-|Q_5|$.
Thus, $\card{Q_3} \leq 2$. Since $8\le d(x_4) = \card{Q_3}+ \card{Q_4}-1
+\card{Q_5}$, we know $\card{Q_4}\geq 6$. But now $d(x_5)=
\card{Q_4} + (\card{Q_5}-1) + \card{Q_1}\ge 6+(1-1)+4=10$.
This contradicts that $\Delta(G)=9$.

\begin{figure}[!b]
\centering
\begin{tikzpicture}[scale=.5]
\begin{scope}
\node[draw=black, fill=white, thick, circle, inner sep=1.2pt] (1) at (0,.85)
{\footnotesize$Q_1$}; 
\node[draw=black, fill=white, thick, circle, inner sep=1.2pt] (2) at
([shift=({-36:1in})]1) {\footnotesize$Q_2$};
\node[draw=black, fill=white, thick, circle, inner sep=1.2pt] (3) at
([shift=({-108:1in})]2) {\footnotesize$Q_3$};
\node[draw=black, fill=white, thick, circle, inner sep=1.2pt] (4) at
([shift=({-180:1in})]3) {\footnotesize$Q_4$};
\node[draw=black, fill=white, thick, circle, inner sep=1.2pt] (5) at
([shift=({-252:1in})]4) {\footnotesize$Q_5$};
\draw[line width=2pt, black] (1)--(2)--(3)--(4)--(5)--(1); \node at (0,-4.2)
{\footnotesize Case 1: $G\in \mathcal{G}_1^*$}; 
\end{scope}

\begin{scope}[xshift=3in]
\node[draw=black, fill=white, thick, circle, inner sep=1.2pt] (1) at (0,.85)
{\footnotesize$Q_1$}; 
\node[draw=black, fill=white, thick, circle, inner sep=1.2pt] (2) at
([shift=({-36:1in})]1) {\footnotesize$Q_2$};
\node[draw=black, fill=white, thick, circle, inner sep=1.2pt] (3) at
([shift=({-108:1in})]2) {\footnotesize$Q_3$};
\node[draw=black, fill=white, thick, circle, inner sep=1.2pt] (4) at
([shift=({-180:1in})]3) {\footnotesize$Q_4$};
\node[draw=black, fill=white, thick, circle, inner sep=1.2pt] (5) at
([shift=({-252:1in})]4) {\footnotesize$Q_5$};
\node[draw=black, fill=white, thick, circle, inner sep=1.2pt] (6) at (0,-1.2)
{\footnotesize$Q_6$};
\draw[line width=2pt, black] (1)--(2)--(3)--(4)--(5)--(1) (1)--(6)--(4)
(6)--(3); \node at (0,-4.2) {\footnotesize Case 2: $G\in \mathcal{G}_2^*$}; 
\end{scope} 

\begin{scope}[xshift=6in]
\node[draw=black, fill=white, thick, circle, inner sep=1.2pt] (1) at (0,.85)
{\footnotesize$Q_1$}; 
\node[draw=black, fill=white, thick, circle, inner sep=1.2pt] (2) at
([shift=({-36:1in})]1) {\footnotesize$Q_2$};
\node[draw=black, fill=white, thick, circle, inner sep=1.2pt] (7) at
([shift=({-108:1in})]2) {\footnotesize$Q_3$};
\node[draw=black, fill=white, thick, circle, inner sep=1.2pt] (4) at
([shift=({-180:1in})]7) {\footnotesize$Q_4$};
\node[draw=black, fill=white, thick, circle, inner sep=1.2pt] (5) at
([shift=({-252:1in})]4) {\footnotesize$Q_5$};
\node[draw=black, fill=white, thick, circle, inner sep=1.2pt] (3) at
([shift=({36:.9in})]4)  {\footnotesize$Q_7$};
\node[draw=black, fill=white, thick, circle, inner sep=1.2pt] (6) at
([shift=({72:.9in})]4) {\footnotesize$Q_6$};
\draw[line width=2pt, black] (1)--(2)--(7)--(4)--(5)--(1) (4)--(6)--(1)
(4)--(3)--(2) (6)--(3); \node at (0,-4.2) {\footnotesize Case 3: $G\in \mathcal{G}_3^*$}; 
\end{scope}
\end{tikzpicture}
\caption{Cases 1--3: $G\in \G_1^*,G\in G_2^*$, or $G\in G_3^*$.\label{case123-fig}}
\end{figure}

\textbf{Case 2: $\bm{G\in\mathcal{G}_2^*}$.}
By the Copycat Lemma, $\card{Q_2}>\card{Q_6}$ and $\card{Q_5}>\card{Q_6}$;
in particular, $\card{Q_2}, \card{Q_5}\geq 2$ since each  $Q_i$ is nonempty.
If $\card{Q_6}\geq 2$, then let 
$I_1 = \{x_2, x_5, x_6\}$ and $I_2=\{x_2',x_5',x_6'\}$. Now $G-(I_1\cup I_2)$
is $5$-degenerate with vertex order $(Q_1',Q_4', Q_3', Q_5',Q_2',Q_6')$; by
Lemma~\ref{degen-lem}, $G$ is 8-colorable, a contradiction.  Assume instead
that $\card{Q_6}=1$.  If $\card{Q_1}\geq 2$, then let 
$I_1=\{x_2,x_5,x_6\}$. Now $G-I_1$ is $6$-degenerate with vertex order $(Q_1',
Q_5', Q_2', Q_4', Q_3', Q_6')$; by Lemma~\ref{degen-lem}, $G$ is 8-colorable, a
contradiction. Assume instead that $\card{Q_1}=1$.  Now Lemma~\ref{8-9-lem}
implies that $1\geq d(x_3)-d(x_2)=\card{N[x_3]}-\card{N[x_2]} = \card{Q_4} +
\card{Q_6} - \card{Q_1} = \card{Q_4}$. By symmetry, $\card{Q_3}\leq 1$. Thus,
$d(x_6)=\card{Q_1}+\card{Q_3}+\card{Q_4}+\card{Q_6}-1=3$, which contradicts
that $\delta(G)\ge 8$.
\smallskip

\textbf{Case 3: $\bm{G\in\mathcal{G}_3^*}$.}
By the Copycat Lemma, $\card{Q_5}>\card{Q_6}$ and $\card{Q_3}>\card{Q_7}$;
in particular,  $\card{Q_5}, \card{Q_3}\geq 2$, since each  $Q_i$ is nonempty.
If $\card{Q_4}\geq 2$, then let 
$I_1=\{x_2,x_5,x_6\}$ and $I_2=\{x_1,x_3,x_7\}$. Now $G-(I_1\cup I_2)$ is
$5$-degenerate with vertex order $(Q_4', Q_3', Q_5', Q_2', Q_1', Q_6',
Q_7')$; by Lemma~\ref{degen-lem}, $G$ is 8-colorable, a contradiction.  Assume
instead that $\card{Q_4}=1$.
Lemma~\ref{8-9-lem} implies that $1\geq d(x_2)-d(x_3) = \card{N[x_2]}-\card{N[x_3]} =
\card{Q_1}+\card{Q_7}-\card{Q_4}=\card{Q_1}+\card{Q_7}-1$. 
Thus, $\card{Q_1} = \card{Q_7}=1$. 
By symmetry, $\card{Q_2}=\card{Q_6}=1$. So
$d(x_6) = \card{Q_1}+\card{Q_7}+\card{Q_4}+\card{Q_6}-1=3$, which contradicts
that $\delta(G)\ge 8$.

\textbf{Case 4: $\bm{G\in\mathcal{G}_4^*}$.} By the Copycat Lemma,
$\card{Q_1}>\card{Q_5}$; so $\card{Q_1}\geq 2$ since each $Q_i$ is
nonempty.  Let $I_1:=\{x_1,x_5,x_7\}$ and $I_2:=\{x_1',x_3,x_6\}$. Now
$G-(I_1\cup I_2)$ is $5$-degenerate with vertex order $(Q_4', Q_2', Q_1', Q_5',
Q_3', Q_7', Q_6')$; by Lemma~\ref{degen-lem}, $G$ is 8-colorable, a contradiction.

\begin{figure}[!h]
\centering
\begin{tikzpicture}[scale=.3] 

\begin{scope}[xshift=-6.5in, yshift=-1.2in,yscale=-1, scale=2]
\node[draw=black, fill=white, thick, circle, inner sep=1.2pt] (1) at (2,1.5)
{\footnotesize$Q_2$}; 
\node[draw=black, fill=white, thick, circle, inner sep=1.2pt] (6) at (-2,1.5)
{\footnotesize$Q_3$};
\node[draw=black, fill=white, thick, circle, inner sep=1.2pt] (4) at (-2,-1.5)
{\footnotesize$Q_4$};
\node[draw=black, fill=white, thick, circle, inner sep=1.2pt] (7) at (2,-1.5)
{\footnotesize$Q_5$};
\node[draw=black, fill=white, thick, circle, inner sep=1.2pt] (2) at
([shift=({-125:.75in})]1) {\footnotesize$Q_6$};
\node[draw=black, fill=white, thick, circle, inner sep=1.2pt] (3) at
([shift=({-55:.75in})]6)  {\footnotesize$Q_7$};
\node[draw=black, fill=white, thick, circle, inner sep=1.2pt] (5) at
([shift=({-45:.55in})]7) {\footnotesize$Q_1$};
\draw[line width=2pt, black] (1)--(6)--(4)--(7)--(1) (6)--(3)--(2)--(1) (2)--(7) (3)--(4);
\draw[black, line width=2pt] plot [smooth, tension=1] coordinates {(5) (3,0) (1)};
\draw[black, line width=2pt] plot [smooth, tension=1] coordinates {(5) (0,-2.5) (4)};
\node[draw=black, fill=white, thick, circle, inner sep=1.2pt] (1) at (2,1.5)
{\footnotesize$Q_2$}; 
\node[draw=black, fill=white, thick, circle, inner sep=1.2pt] (4) at (-2,-1.5)
{\footnotesize$Q_4$};
\node[draw=black, fill=white, thick, circle, inner sep=1.2pt] at
([shift=({-45:.55in})]7) {\footnotesize$Q_1$};

\node at (0,2.78) {\footnotesize Case 4: $G\in \mathcal{G}_4^*$}; 
\end{scope}

\begin{scope}
\node[draw=black, fill=white, thick, circle, inner sep=1.2pt] (1) at (0,1.76) {\footnotesize$Q_1$}; 
\node[draw=black, fill=white, thick, circle, inner sep=1.2pt] (8) at ([shift=({-36:2in})]1) {\footnotesize$Q_2$};
\node[draw=black, fill=white, thick, circle, inner sep=1.2pt] (3) at ([shift=({-108:2in})]8) {\footnotesize$Q_3$};
\node[draw=black, fill=white, thick, circle, inner sep=1.2pt] (4) at ([shift=({-180:2in})]3) {\footnotesize$Q_4$};
\node[draw=black, fill=white, thick, circle, inner sep=1.2pt] (5) at ([shift=({-252:2in})]4) {\footnotesize$Q_5$};

\node[draw=black, fill=white, thick, circle, inner sep=1.2pt] (2) at ([shift=({-60:1.5in})]1)  {\footnotesize$Q_7$};
\node[draw=black, fill=white, thick, circle, inner sep=1.2pt] (6) at ([shift=({-90:2in})]1) {\footnotesize$Q_8$};
\node[draw=black, fill=white, thick, circle, inner sep=1.2pt] (7) at ([shift=({-120:1.5in})]1) {\footnotesize$Q_6$};
\draw[line width=2pt, black] (1)--(8)--(3)--(4)--(5)--(1) (4)--(6)--(1)  (6)--(3) (4)--(7)--(2)--(3) (7)--(1)--(2);
\node at (0,-8.5) {\footnotesize Case 5: $G\in \mathcal{G}_5^*$}; 
\end{scope}

\begin{scope}[scale=1.45, xshift=4.3in, yshift=-.86in,yscale=-1]
\node[draw=black, fill=white, thick, circle, inner sep=1.2pt] (4) at (-2.75,-3) {\footnotesize$Q_4$};
\node[draw=black, fill=white, thick, circle, inner sep=1.2pt] (5) at (-2.75,2) {\footnotesize$Q_3$}; 
\node[draw=black, fill=white, thick, circle, inner sep=1.2pt] (8) at (2.75,2) {\footnotesize$Q_2$};
\node[draw=black, fill=white, thick, circle, inner sep=1.2pt] (2) at (2.75,-3) {\footnotesize$Q_1$};

\node[draw=black, fill=white, thick, circle, inner sep=1.2pt] (1) at ([shift=({-17:1.75in})]5) {\footnotesize$Q_6$};
\node[draw=black, fill=white, thick, circle, inner sep=1.2pt] (6) at ([shift=({-163:1.75in})]8)  {\footnotesize$Q_7$};
\node[draw=black, fill=white, thick, circle, inner sep=1.2pt] (3) at ([shift=({-80:.75in})]6) {\footnotesize$Q_8$};
\node[draw=black, fill=white, thick, circle, inner sep=1.2pt] (7) at ([shift=({-100:.75in})]1) {\footnotesize$Q_5$};
\draw[line width=2pt, black] (4)--(5)--(8)--(2) (5)--(1)--(7)--(2)--(3)--(4)--(6)--(1) (8)--(6)--(3) (2)--(1) (7)--(4);
\node at (0,3.7) {\footnotesize Case 6: $G\in \mathcal{G}_6^*$}; 
\end{scope}
\end{tikzpicture}
\caption{Cases 4--6: $G\in \G_4^*,G\in G_5^*$, or $G\in G_6^*$.\label{case456-fig}}
\end{figure}

\textbf{Case 5: $\bm{G\in\mathcal{G}_5^*}$.}
By the Copycat Lemma, $\card{Q_5}>\card{Q_6}$ and $\card{Q_2}>\card{Q_7}$;
so $\card{Q_5}, \card{Q_2}\geq 2$ since each  $Q_i$ is nonempty.
If $\card{Q_1}\geq 3$, then $G[\{x_1,x_1',x_1'',x_2,x_2',
x_5,x_5',x_6,x_7\}]\cong K_3\vee3K_2$; this subgraph is $d_1$-choosable, 
contradicting Lemma~\ref{d1-choosable-lem}.  Assume instead that $\card{Q_1}\leq2$. 
If $\card{Q_3}\geq 4$, then $G[\{x_3,x_3',x_3'',x_3''',x_2, x_4, x_7,
x_8\}]\cong K_4\vee H$ where $H$ contains two disjoint pairs of nonadjacent
vertices; this subgraph is $d_1$-choosable, contradicting
Lemma~\ref{d1-choosable-lem}. So $\card{Q_3}\leq3$. By symmetry, $\card{Q_4}\leq 3$. 
Thus, $\card{Q_2},\card{Q_5}\geq 4$, since $d(x_2),d(x_5)\geq 8$. However, now
$d(x_1)=(\card{Q_1}-1)+\card{Q_2}+\card{Q_5}+\card{Q_6}+\card{Q_7}\geq
(1-1)+4+4+1+1=10$, which contradicts that $\Delta(G)=9$.

\textbf{Case 6: $\bm{G\in\mathcal{G}_6^*}$.}
Let $I_1:=\{x_3,x_5,x_7\}$ and
$I_2:=\{x_2,x_6,x_8\}$. Now $G-(I_1\cup I_2)$ is $5$-degenerate with vertex order $(Q_4',
Q_1', Q_8',Q_5', Q_7', Q_6', Q_3', Q_2')$.  By Lemma~\ref{degen-lem}, $G$ is 8-colorable, a contradiction.

\begin{figure}[!h]
\centering
\begin{tikzpicture}[yscale=.9]
\begin{scope}[rotate=90, yscale=-1, scale=.55] 
\node[draw=black, fill=white, thick, circle, inner sep=1.2pt] (5) at (2,1.5) {\footnotesize$Q_5$}; 
\node[draw=black, fill=white, thick, circle, inner sep=1.2pt] (2) at (-2,1.5) {\footnotesize$Q_2$};
\node[draw=black, fill=white, thick, circle, inner sep=1.2pt] (1) at (-2,-1.5) {\footnotesize$Q_3$};
\node[draw=black, fill=white, thick, circle, inner sep=1.2pt] (4) at (2,-1.5) {\footnotesize$Q_7$};

\node[draw=black, fill=white, thick, circle, inner sep=1.2pt] (6) at ([shift=({-125:.75in})]5) {\footnotesize$Q_8$};
\node[draw=black, fill=white, thick, circle, inner sep=1.2pt] (3) at ([shift=({-55:.75in})]2)  {\footnotesize$Q_6$};
\node[draw=black, fill=white, thick, circle, inner sep=1.2pt] (7) at ([shift=({-45:.55in})]4) {\footnotesize$Q_4$};
\node[draw=black, fill=white, thick, circle, inner sep=1.2pt] (8) at ([shift=({45:.55in})]5) {\footnotesize$Q_1$};
\draw[line width=2pt, black] (5)--(2)--(1)--(4)--(5) (2)--(3)--(6)--(5) (6)--(4) (3)--(1);

\draw[black, line width=2pt] plot [smooth, tension=1] coordinates {(7) (3,0) (5)};
\draw[black, line width=2pt] plot [smooth, tension=1] coordinates {(7) (0,-2.5) (1)};

\draw[black, line width=2pt] plot [smooth, tension=1] coordinates {(8) (3,0) (4)};
\draw[black, line width=2pt] plot [smooth, tension=1] coordinates {(8) (0, 2.5) (2)};

\node[draw=black, fill=white, thick, circle, inner sep=1.2pt] (5) at (2,1.5) {\footnotesize$Q_5$}; 
\node[draw=black, fill=white, thick, circle, inner sep=1.2pt] (1) at (-2,-1.5) {\footnotesize$Q_3$};
\node[draw=black, fill=white, thick, circle, inner sep=1.2pt] at ([shift=({-45:.55in})]4) {\footnotesize$Q_4$};

\node[draw=black, fill=white, thick, circle, inner sep=1.2pt] (2) at (-2,1.5) {\footnotesize$Q_2$};
\node[draw=black, fill=white, thick, circle, inner sep=1.2pt] (4) at (2,-1.5) {\footnotesize$Q_7$};
\node[draw=black, fill=white, thick, circle, inner sep=1.2pt] (8) at ([shift=({45:.55in})]5) {\footnotesize$Q_1$};
\node at (-3.1,0) {\footnotesize Case 7: $G\in \mathcal{G}_7^*$}; 
\end{scope}

\begin{scope}[xshift=2.5in, yshift=.4in, scale=.8] 
\node[fill=white, circle, inner sep=1.2pt] (0) at (0,0.85) {}; 
\node[draw=black, fill=white, thick, circle, inner sep=1.2pt] (6) at ([shift=({0:0.3in})]0) {\footnotesize$Q_6$}; 
\node[draw=black, fill=white, thick, circle, inner sep=1.2pt] (5) at ([shift=({180:0.3in})]0) {\footnotesize$Q_5$}; 
\node[draw=black, fill=white, thick, circle, inner sep=1.2pt] (1) at ([shift=({-30:.75in})]0) {\footnotesize$Q_1$};
\node[draw=black, fill=white, thick, circle, inner sep=1.2pt] (2) at ([shift=({-100:1in})]1) {\footnotesize$Q_2$};
\node[draw=black, fill=white, thick, circle, inner sep=1.2pt] (4) at ([shift=({-150:.75in})]0) {\footnotesize$Q_4$};
\node[draw=black, fill=white, thick, circle, inner sep=1.2pt] (3) at ([shift=({-80:1in})]4) {\footnotesize$Q_3$};
\node[draw=black, fill=white, thick, circle, inner sep=1.2pt] (8) at ([shift=({-90:0.90in})]6) {\footnotesize$Q_8$};
\node[draw=black, fill=white, thick, circle, inner sep=1.2pt] (7) at ([shift=({-90:0.90in})]5) {\footnotesize$Q_7$};
\draw[line width=2pt, black] (6)--(4)--(3)--(2)--(1)--(5)--(4)--(8)--(2) (3)--(7)--(1)--(6)--(8) (5)--(7);
\draw[black, line width=2pt] plot [smooth, tension=1] coordinates {(7) (0,-2) (8)};
\node[draw=black, fill=white, thick, circle, inner sep=1.2pt] (8) at ([shift=({-90:0.9in})]6) {\footnotesize$Q_7$};
\node[draw=black, fill=white, thick, circle, inner sep=1.2pt] (7) at ([shift=({-90:0.9in})]5) {\footnotesize$Q_8$};
\node at (0,-3.3) {\footnotesize Case 8: $G\in \mathcal{G}_8^*$}; 
\end{scope}
\end{tikzpicture}
\caption{Cases 7--8: $G\in \G_7^*$ or $G\in G_8^*$.\label{case78-fig}}
\end{figure}

\textbf{Case 7: $\bm{G\in\mathcal{G}_7^*}$.}
By the Copycat Lemma, $\card{Q_4}>\card{Q_7}$; so
$\card{Q_4}\geq 2$ since each $Q_i$ is nonempty.
If $\card{Q_1}\geq 7$, then $d(x_2)=\card{Q_3} + (\card{Q_2}-1) + \card{Q_6} +
\card{Q_5} + \card{Q_1}\geq 10$, contradicting that $\Delta(G)=9$. Thus, $\card{Q_1}\leq 6$.
Let $I_1:=\{x_4,x_6,x_7\}$ and $I_2 := \{x_2,x_4',x_8\}$. Now $G-(I_1\cup I_2)$
is $5$-degenerate with vertex order $(Q_5', Q_3', Q_4', Q_7', Q_2', Q_8', Q_6',
Q_1')$.  By Lemma~\ref{degen-lem}, $G$ is 8-colorable, a contradiction. 

\textbf{Case 8: $\bm{G\in\mathcal{G}_8^*}$.}
Let $I_1:=\{x_3,x_5,x_7\}$ and
$I_2:=\{x_2,x_6,x_8\}$. Now $G-(I_1\cup I_2)$ is $5$-degenerate with
order $(Q_4', Q_1', Q_3', Q_2', Q_7', Q_8', Q_6', Q_5')$; by
Lemma~\ref{degen-lem}, $G$ is 8-colorable, a contradiction.

\textbf{Case 9: $\bm{G\in\mathcal{G}_9^*}$.}
By the Copycat Lemma $\card{Q_9}>\card{Q_5}$; so $\card{Q_9}\geq 2$
since each $Q_i$ is nonempty.  
Let $I_1 := \{x_3, x_5, x_7, x_9\}$ and $I_2:=\{x_2,x_6,x_8,x_9'\}$.  
Now $G-(I_1\cup I_2)$ is $5$-degenerate with order $(Q_4', Q_1', Q_3', Q_2',
Q_7', Q_8', Q_9', Q_6', Q_5')$.  By Lemma~\ref{degen-lem}, $G$ is 8-colorable,
a contradiction.

\textbf{Case 10: $\bm{G\in\mathcal{G}_{10}^*}$.}
Let $I_1:=\{x_3,x_5,x_7\}$ and $I_2:=\{x_2,x_6,x_8\}$. Now $G-(I_1\cup I_2)$ is
$5$-degenerate with order $(Q_9', Q_4', Q_1', Q_3', Q_2', Q_7', Q_8', Q_6',
Q_5')$.  By Lemma~\ref{degen-lem}, $G$ is 8-colorable, a contradiction.

\begin{figure}[!h]
\centering
\begin{tikzpicture}
\begin{scope}[scale=.8, yshift=-.0975in] 
\node[fill=white, circle, inner sep=1.2pt] (0) at (0,0.85) {}; 
\node[draw=black, fill=white, thick, circle, inner sep=1.2pt] (6) at ([shift=({0:0.3in})]0) {\footnotesize$Q_6$}; 
\node[draw=black, fill=white, thick, circle, inner sep=1.2pt] (5) at ([shift=({180:0.3in})]0) {\footnotesize$Q_5$}; 
\node[draw=black, fill=white, thick, circle, inner sep=1.2pt] (1) at ([shift=({-30:.75in})]0) {\footnotesize$Q_1$};
\node[draw=black, fill=white, thick, circle, inner sep=1.2pt] (2) at ([shift=({-100:1in})]1) {\footnotesize$Q_2$};
\node[draw=black, fill=white, thick, circle, inner sep=1.2pt] (4) at ([shift=({-150:.75in})]0) {\footnotesize$Q_4$};
\node[draw=black, fill=white, thick, circle, inner sep=1.2pt] (3) at ([shift=({-80:1in})]4) {\footnotesize$Q_3$};
\node[draw=black, fill=white, thick, circle, inner sep=1.2pt] (8) at ([shift=({-90:0.90in})]6) {\footnotesize$Q_7$};
\node[draw=black, fill=white, thick, circle, inner sep=1.2pt] (7) at ([shift=({-90:0.90in})]5) {\footnotesize$Q_8$};
\node[draw=black, fill=white, thick, circle, inner sep=1.2pt] (9) at ([shift=({180:0.65in})]1) {\footnotesize$Q_9$};
\draw[line width=2pt, black] (6)--(4)--(3)--(2)--(1)--(5)--(4)--(8)--(2) (3)--(7)--(1)--(6)--(8) (5)--(7) (1)--(9)--(4);
\draw[black, line width=2pt] plot [smooth, tension=1] coordinates {(7) (0,-2) (8)};
\node[draw=black, fill=white, thick, circle, inner sep=1.2pt] (8) at ([shift=({-90:0.9in})]6) {\footnotesize$Q_7$};
\node[draw=black, fill=white, thick, circle, inner sep=1.2pt] (7) at ([shift=({-90:0.9in})]5) {\footnotesize$Q_8$};
\node at (0,-3.5) {\footnotesize Case 9: $G\in \mathcal{G}_9^*$}; 
\end{scope}

\begin{scope}[xshift=2in, scale=.8] 
\node[fill=white, circle, inner sep=0pt] (0) at (0,0.85) {}; 
\node[draw=black, fill=white, thick, circle, inner sep=1.2pt] (6) at ([shift=({0:0.25in})]0) {\footnotesize$Q_6$}; 
\node[draw=black, fill=white, thick, circle, inner sep=1.2pt] (5) at ([shift=({180:0.25in})]0) {\footnotesize$Q_5$}; 
\node[draw=black, fill=white, thick, circle, inner sep=1.2pt] (1) at ([shift=({-30:1in})]0) {\footnotesize$Q_1$}; 
\node[draw=black, fill=white, thick, circle, inner sep=1.2pt] (2) at ([shift=({-108:1in})]1) {\footnotesize$Q_2$}; 
\node[draw=black, fill=white, thick, circle, inner sep=1.2pt] (4) at ([shift=({-150:1in})]0) {\footnotesize$Q_4$}; 
\node[draw=black, fill=white, thick, circle, inner sep=1.2pt] (3) at ([shift=({-72:1in})]4) {\footnotesize$Q_3$}; 
\node[draw=black, fill=white, thick, circle, inner sep=1.2pt] (8) at ([shift=({-80:.9in})]6) {\footnotesize$Q_7$}; 
\node[draw=black, fill=white, thick, circle, inner sep=1.2pt] (7) at ([shift=({-100:.9in})]5) {\footnotesize$Q_8$}; 
\node[draw=black, fill=white, thick, circle, inner sep=1.2pt] (9) at ([shift=({-90:1.2in})]0) {\footnotesize$Q_9$}; 
\draw[line width=2pt, black] (6)--(4)--(3)--(2)--(1)--(5)--(4)--(8)--(2) (3)--(7)--(1)--(6)--(8) (5)--(7) (7)--(8) (5)--(9)--(6) (3)--(9)--(2);
\node at (0,-3.75) {\footnotesize Case 10: $G\in \mathcal{G}_{10}^*$}; 
\end{scope}

\begin{scope}[xshift=4in, yshift=.01in, scale=.75] 
\node[draw=black, fill=white, thick, circle, inner sep=1.2pt] (1) at (0,.85) {\footnotesize $A_1$}; 
\node[draw=black, fill=white, thick, circle, inner sep=1.2pt] (2) at ([shift=({-36:1in})]1) {\footnotesize $A_2$};
\node[draw=black, fill=white, thick, circle, inner sep=1.2pt] (3) at ([shift=({-108:1in})]2) {\footnotesize $A_3$};
\node[draw=black, fill=white, thick, circle, inner sep=1.2pt] (4) at ([shift=({-180:1in})]3) {\footnotesize $A_4$};
\node[draw=black, fill=white, thick, circle, inner sep=1.2pt] (5) at ([shift=({-252:1in})]4) {\footnotesize $A_5$};
\node[draw=black, fill=white, thick, circle, inner sep=1.2pt] (6) at (0,-1.2) {\footnotesize $A_6$};
\node[draw=black, fill=white, thick, circle, inner sep=1.2pt] (7) at (1.75,1) {\footnotesize $A_7$}; 
\draw[line width=2pt, black] (1)--(2)--(3)--(4)--(5)--(1) (1)--(6)--(4) (6)--(3);
\draw[black, dotted, line width=2pt] (7)--(6); 
\node at (0,-4) {\footnotesize Case 11: $G\in \mathcal{H}^*$}; 
\end{scope}
\end{tikzpicture}
\caption{Cases 9--11: $G\in \G_9^*$, $G\in G_{10}^*$, or $G\in\HH^*$.\label{case910H-fig}}
\end{figure}

\textbf{Case 11: $\bm{G\in\mathcal{H}^*}$.}
Let $\HH$ be the class of connected $(P_5, \text{gem})$-free graphs
$G$ for which $V(G)$ can be partitioned into non-empty sets $A_1, \dots,
A_7$ such that the following properties all hold: each $A_i$ induces a $P_4$-free graph, the vertex-set of each
component of $G[A_7]$ is a homogeneous set, each edge with exactly one
endpoint in $V(A_7)$ has the other endpoint in $V(A_6)$, and 
for all distinct $i,j\in\{1,\dots, 6\}$, if a solid edge appears between $A_i$ and $A_j$
in the rightmost graph in Figure~\ref{case910H-fig}, 
then $[A_i,A_j]$ is complete, but if no edge
appears then $[A_i,A_j]=\emptyset$.   Let $\mathcal{H}^*$ be the class of
graphs that are in $\mathcal{H}$ with $A_1, \dots, A_5$ being cliques
and also each component of $A_7$ being a clique.  By Theorem~\ref{CKMM-thm2},
we know that if $G\in \HH$, then in fact $G\in \HH^*$.

If $\card{A_5}\leq \omega(A_6)$, then by criticality $G-A_5$ has an 8-coloring,
call it $\vph$.  To extend $\vph$ to $G$, we color $A_5$ with colors used on a
maximum clique in $A_6$. This yields a proper 8-coloring of $G$,  a
contradiction.  Thus, $\card{A_5}>\omega(A_6)$.  By symmetry,
$\card{A_2}>\omega(A_6)$. Since $A_6$ is nonempty by definition, $\card{A_2},
\card{A_5}\geq 2$.  Since $d(a_6)\leq 9$ and each of $A_1$, $A_3$, and $A_4$ is
nonempty, each component of $A_7$ has size at most 6.  
If $\card{A_6}\geq 2$, then let $I_1=\{a_2,a_5,a_6\}$ and
$I_2=\{a_2',a_5',a_6'\}$.  Now $G-(I_1\cup I_2)$ is $5$-degenerate with order
$A_1', A_5', A_2', A_4', A_3', A_6', A_7'$.  By Lemma~\ref{degen-lem}, $G$ is
8-colorable, a contradiction.  So assume instead that $\card{A_6}=1$.

By criticality, there exists an $8$-coloring $\varphi$ of $G':= G- V(A_7)$.
Each component of $A_7$ is a clique of size at most $6$ that is adjacent to
only $a_6$ in $G'$. Thus we can color each component of $A_7$ with $8$ colors
so that $\varphi$ extends to $G$, which contradicts that $\chi(G)\geq
\Delta(G)=9$. 
\end{proof}

\begin{figure}[!h]
\centering
\end{figure}

\bibliographystyle{amsplain}

\end{document}